\begin{document}

\newenvironment{proof}[1][Proof]{\textbf{#1.} }{\ \rule{0.5em}{0.5em}}

\newtheorem{theorem}{Theorem}[section]
\newtheorem{definition}[theorem]{Definition}
\newtheorem{lemma}[theorem]{Lemma}
\newtheorem{remark}[theorem]{Remark}
\newtheorem{proposition}[theorem]{Proposition}
\newtheorem{corollary}[theorem]{Corollary}
\newtheorem{example}[theorem]{Example}

\numberwithin{equation}{section}
\newcommand{\ep}{\varepsilon}
\newcommand{\R}{{\mathbb  R}}
\newcommand\C{{\mathbb  C}}
\newcommand\Q{{\mathbb Q}}
\newcommand\Z{{\mathbb Z}}
\newcommand{\N}{{\mathbb N}}

\newcommand{\bfi}{\bfseries\itshape}

\newsavebox{\savepar}
\newenvironment{boxit}{\begin{lrbox}{\savepar}
\begin{minipage}[b]{15.5cm}}{\end{minipage}\end{lrbox}
\fbox{\usebox{\savepar}}}

\title{{\bf On the coercivity of continuously differentiable vector fields}}
\author{R\u{a}zvan M. Tudoran}

\date{}
\maketitle \makeatother

\begin{abstract}
Given an arbitrary fixed continuously differentiable vector field on $\mathbb{R}^n$, we prove that this vector field is coercive if and only if its conservative part is coercive. We apply this result in order to provide sufficient conditions to guarantee the co-existence of equilibrium states of a continuously differentiable vector field and its conservative part.
\end{abstract}

\medskip

\textbf{MSC 2010}: 37C10; 47H05.

\textbf{Keywords}: coercive vector field; conservative vector field; equilibrium states.

\section{Introduction}
\label{section:one}
The aim of this short note is to provide an alternative point of view regarding the classical notion of coercivity in the class of continuously differentiable vector fields on $\mathbb{R}^n$. 

More precisely, in the next section, using Presnov's decomposition of  continuously differentiable vector fields, we prove that an arbitrary given $\mathcal{C}^{1}$ vector field in $\mathbb{R}^n$ is coercive if and only if its conservative part is a coercive vector field. Consequently, we obtain that in the class of $\mathcal{C}^{1}$ vector fields in $\mathbb{R}^n$, the coercivity is determined actually \textbf{only} by the conservative part of the vector field. In the third section we present two consequences of the main result, each of them providing sufficient conditions to guarantee the co-existence of equilibrium states of a continuously differentiable vector field and its conservative part.

\section{Continuously differentiable coercive vector fields}

In this section we present the main result of this note which gives a characterization of the coercivity of a continuously differentiable vector field on $\mathbb{R}^n$ in terms of the coercivity of its conservative part. We start by recalling the definition of a \textbf{coercive} vector field on $\mathbb{R}^{n}$. 
\begin{definition}
A vector field $X\in\mathfrak{X}(\mathbb{R}^n)$ is called \textit{coercive} if 
$$
\lim_{\|\mathbf{x}\|\rightarrow \infty}\dfrac{\langle X(\mathbf{x}),\mathbf{x}\rangle}{\|\mathbf{x}\|}=\infty,
$$
where $\langle \cdot,\cdot \rangle$, $\| \cdot\|$, stand for the canonical inner product on $\mathbb{R}^{n}$, and the associated norm respectively.
\end{definition}

Let $X\in\mathfrak{X}(\mathbb{R}^n)$ be a continuously differentiable vector field. Recall from \cite{presnov} that $X$ can be decomposed uniquely in a conservative part and a sphere invariant part, such that
\begin{equation}\label{tdec}
X(\mathbf{x})=\nabla H_{X}(\mathbf{x})+u(\mathbf{x}), \forall\mathbf{x}\in\mathbb{R}^n,
\end{equation}
where $H_X(\mathbf{0})=0$ and $\langle u(\mathbf{x}), \mathbf{x}\rangle =0$, $\forall\mathbf{x}\in\mathbb{R}^n$. The potential function which generates the \textit{conservative} part of $X$, that is $\nabla H_X$, is given by
\begin{equation}\label{potential}
H_{X}(\mathbf{x})=\int_{0}^{1}\langle X(t\mathbf{x}),\mathbf{x}\rangle \mathrm{d}t, \forall\mathbf{x}\in\mathbb{R}^n.
\end{equation}
Note that this decomposition, known as Presnov's decomposition, is tight related to the canonical Euclidean structure of the ambient space. For a geometric extension of the Presnov decomposition \eqref{tdec}, see \cite{TDR}. 

A direct consequence of the decomposition \eqref{tdec} is the following characterization of coercivity of continuously differentiable vector fields on $\mathbb{R}^{n}$, which shows that \textit{the coercivity is determined actually only by the conservative part of the vector field}.

\begin{theorem}\label{mtm}
Let $X\in\mathfrak{X}(\mathbb{R}^n)$ be a continuously differentiable vector field on $\mathbb{R}^{n}$. Then $X$ is coercive if and only if its conservative part $\nabla H_{X}$ is coercive.
\end{theorem}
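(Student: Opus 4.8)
The plan is to reduce the statement to a single algebraic identity coming from the orthogonality built into Presnov's decomposition \eqref{tdec}. Starting from $X(\mathbf{x})=\nabla H_{X}(\mathbf{x})+u(\mathbf{x})$, I would take the inner product of both sides with $\mathbf{x}$ and use bilinearity of $\langle\cdot,\cdot\rangle$ together with the defining property $\langle u(\mathbf{x}),\mathbf{x}\rangle=0$ for all $\mathbf{x}\in\mathbb{R}^n$. This yields
\begin{equation*}
\langle X(\mathbf{x}),\mathbf{x}\rangle=\langle \nabla H_{X}(\mathbf{x}),\mathbf{x}\rangle+\langle u(\mathbf{x}),\mathbf{x}\rangle=\langle \nabla H_{X}(\mathbf{x}),\mathbf{x}\rangle,\qquad\forall\mathbf{x}\in\mathbb{R}^n.
\end{equation*}

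Next I would divide by $\|\mathbf{x}\|$ for $\mathbf{x}\neq\mathbf{0}$, obtaining that the two scalar functions $\mathbf{x}\mapsto \langle X(\mathbf{x}),\mathbf{x}\rangle/\|\mathbf{x}\|$ and $\mathbf{x}\mapsto \langle \nabla H_{X}(\mathbf{x}),\mathbf{x}\rangle/\|\mathbf{x}\|$ coincide identically on $\mathbb{R}^n\setminus\{\mathbf{0}\}$. Since the coercivity condition in the Definition is a statement purely about the limit of this quotient as $\|\mathbf{x}\|\rightarrow\infty$, and the two quotients are the very same function, the limit exists and equals $\infty$ for $X$ precisely when it does so for $\nabla H_{X}$. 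Both implications of the equivalence then follow at once.

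I do not anticipate a genuine obstacle here: the whole content is the observation that the sphere-invariant part $u$ is annihilated by pairing with the radial direction, so it contributes nothing to $\langle X(\mathbf{x}),\mathbf{x}\rangle$ and a fortiori nothing to the coercivity limit. The only points worth stating carefully are that the identity holds for \emph{every} $\mathbf{x}$, so that restricting to $\|\mathbf{x}\|$ large is legitimate, and that no regularity beyond what guarantees the existence of the decomposition \eqref{tdec}--\eqref{potential} is needed, since coercivity is a limit condition rather than a smoothness condition.
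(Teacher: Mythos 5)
Your proposal is correct and follows exactly the paper's own argument: take the inner product of the Presnov decomposition with $\mathbf{x}$, use $\langle u(\mathbf{x}),\mathbf{x}\rangle=0$ to get $\langle X(\mathbf{x}),\mathbf{x}\rangle=\langle\nabla H_X(\mathbf{x}),\mathbf{x}\rangle$, and observe that the two coercivity quotients are identical. Nothing is missing.
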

\begin{proof}
Using the decomposition \eqref{tdec} it follows that for any $\mathbf{x}\in\mathbb{R}^n$ 
$$
\langle X(\mathbf{x}),\mathbf{x}\rangle = \langle \nabla H_X (\mathbf{x})+u(\mathbf{x}),\mathbf{x}\rangle = \langle \nabla H_X (\mathbf{x}),\mathbf{x}\rangle + \langle u(\mathbf{x}),\mathbf{x}\rangle = \langle \nabla H_X (\mathbf{x}),\mathbf{x}\rangle,
$$
and hence
\begin{equation}\label{rel1}
\langle X(\mathbf{x}),\mathbf{x}\rangle = \langle \nabla H_X (\mathbf{x}),\mathbf{x}\rangle, \forall \mathbf{x}\in\mathbb{R}^n. 
\end{equation}
Consequently, we obtain that
\begin{equation*}
\lim_{\|\mathbf{x}\|\rightarrow \infty}\dfrac{\langle X(\mathbf{x}),\mathbf{x}\rangle}{\|\mathbf{x}\|}=\infty ~~~~\text{if and only if}~~~~ \lim_{\|\mathbf{x}\|\rightarrow \infty}\dfrac{\langle \nabla H_X(\mathbf{x}),\mathbf{x}\rangle}{\|\mathbf{x}\|}=\infty.
\end{equation*}
\end{proof}

Let us recall from \cite{presnov} that the uniqueness of the decomposition \eqref{tdec} is guaranteed by the fact that $X$ is continuously differentiable on the whole $\mathbb{R}^n$. Nevertheless the equality \eqref{rel1} holds true for an arbitrary continuously differentiable vector field defined on an open subset of $\mathbb{R}^{n}$, star--shaped with respect to the origin.

\begin{proposition}\label{prpok}
Let $X\in\mathfrak{X}(\Omega)$ be a continuously differentiable vector field defined on an open set $\Omega\subseteq\mathbb{R}^{n}$, star--shaped with respect to the origin. Then the following relation holds true
\begin{equation*}
\langle X(\mathbf{x}),\mathbf{x}\rangle = \langle \nabla H_X (\mathbf{x}),\mathbf{x}\rangle, \forall \mathbf{x}\in\Omega,
\end{equation*}
where 
\begin{equation*}
H_{X}(\mathbf{x})=\int_{0}^{1}\langle X(t\mathbf{x}),\mathbf{x}\rangle \mathrm{d}t, \forall\mathbf{x}\in\Omega.
\end{equation*}
\end{proposition}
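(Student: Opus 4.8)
The plan is to compute the left-hand side of the asserted identity directly from the integral formula for $H_X$ by differentiating under the integral sign. Write $H_X(\mathbf{x})=\int_0^1 \sum_{j=1}^n X_j(t\mathbf{x})\,x_j\,\mathrm{d}t$, where $X_j$ denote the components of $X$. Since $\Omega$ is star-shaped with respect to the origin, the segment $\{t\mathbf{x}:t\in[0,1]\}$ lies in $\Omega$ for every $\mathbf{x}\in\Omega$, so the integrand is well defined; and since $X\in\mathcal{C}^1(\Omega)$, the map $(t,\mathbf{x})\mapsto\sum_j X_j(t\mathbf{x})x_j$ is continuous with continuous partial derivatives in $\mathbf{x}$ on a neighborhood of any point, which legitimizes both that $H_X$ is differentiable and the interchange of $\partial_{x_i}$ with $\int_0^1(\cdot)\,\mathrm{d}t$.

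Carrying this out, I would first obtain
\begin{equation*}
\partial_{x_i}H_X(\mathbf{x})=\int_0^1\Bigl(\,t\sum_{j=1}^n(\partial_i X_j)(t\mathbf{x})\,x_j+X_i(t\mathbf{x})\Bigr)\mathrm{d}t,
\end{equation*}
then multiply by $x_i$ and sum over $i$ to get
\begin{equation*}
\langle\nabla H_X(\mathbf{x}),\mathbf{x}\rangle=\int_0^1\Bigl(\,t\sum_{j=1}^n x_j\sum_{i=1}^n x_i(\partial_i X_j)(t\mathbf{x})+\langle X(t\mathbf{x}),\mathbf{x}\rangle\Bigr)\mathrm{d}t.
\end{equation*}
The next step is the key algebraic observation: by the chain rule, $\frac{\mathrm{d}}{\mathrm{d}t}X_j(t\mathbf{x})=\sum_{i=1}^n x_i(\partial_i X_j)(t\mathbf{x})$, so the integrand equals $t\sum_j x_j\frac{\mathrm{d}}{\mathrm{d}t}X_j(t\mathbf{x})+\sum_j x_j X_j(t\mathbf{x})$, which is precisely $\frac{\mathrm{d}}{\mathrm{d}t}\bigl[t\langle X(t\mathbf{x}),\mathbf{x}\rangle\bigr]$. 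Applying the fundamental theorem of calculus then yields $\langle\nabla H_X(\mathbf{x}),\mathbf{x}\rangle=\bigl[t\langle X(t\mathbf{x}),\mathbf{x}\rangle\bigr]_{t=0}^{t=1}=\langle X(\mathbf{x}),\mathbf{x}\rangle$, as desired.

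The only delicate point — and the one I would be careful to state explicitly — is the justification of differentiation under the integral sign together with the differentiability of $H_X$ itself: this is exactly where the hypotheses enter, the star-shapedness ensuring the ray segment stays inside $\Omega$ (so the formula for $H_X$ makes sense locally, uniformly in $\mathbf{x}$ near a given point) and the $\mathcal{C}^1$ regularity of $X$ ensuring the integrand has continuous partials. The remaining computation is routine once the total-derivative identity $t\,\partial_t X_j(t\mathbf{x})+X_j(t\mathbf{x})=\partial_t\bigl[tX_j(t\mathbf{x})\bigr]$ is recognized. Note that, unlike in Theorem \ref{mtm}, no appeal to the uniqueness of Presnov's decomposition is needed here, since the claim is a pointwise identity about the explicit function $H_X$.
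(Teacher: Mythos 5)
Your proof is correct and is essentially the paper's argument in component form: both differentiate the integral formula for $H_X$ under the integral sign, recognize the resulting integrand as the total derivative $\frac{\mathrm{d}}{\mathrm{d}t}\bigl[t\langle X(t\mathbf{x}),\mathbf{x}\rangle\bigr]=\frac{\mathrm{d}}{\mathrm{d}t}h_X(t\mathbf{x})$ with $h_X(\mathbf{x})=\langle X(\mathbf{x}),\mathbf{x}\rangle$, and conclude by the fundamental theorem of calculus (the paper merely packages the computation through the auxiliary scalar $h_X$, writing $H_X(\mathbf{x})=\int_0^1 t^{-1}h_X(t\mathbf{x})\,\mathrm{d}t$). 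Your explicit attention to star-shapedness and the justification of the interchange of $\partial_{x_i}$ with the integral is, if anything, slightly more careful than the paper's.
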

\begin{proof}
Let $h_X \in\mathcal{C}^{1}(\Omega,\mathbb{R})$ be given by $h_{X}(\mathbf{x}):=\langle X(\mathbf{x}),\mathbf{x}\rangle$, $\forall\mathbf{x}\in\Omega$. Using this notation, the following relation holds true
$$
H_X (\mathbf{x})=\int_{0}^{1}\dfrac{1}{t}h_X(t\mathbf{x})\mathrm{d}t, \forall\mathbf{x}\in\Omega.
$$
Thus, for any $\mathbf{x}=(x_1,\dots,x_n)\in\Omega$ we obtain successively that
\begin{align*}
\langle\nabla H_{X}(\mathbf{x}),\mathbf{x}\rangle &= \sum_{i=1}^{n} x_i \dfrac{\partial H_X}{\partial x_i}(\mathbf{x})=\sum_{i=1}^{n} x_i 
\dfrac{\partial}{\partial x_i}\int_{0}^{1}\dfrac{1}{t}h_X (t\mathbf{x})\mathrm{d}t=\sum_{i=1}^{n} x_i \int_{0}^{1}\dfrac{\partial h_X}{\partial x_i} (t\mathbf{x})\mathrm{d}t\\
&=\int_{0}^{1}\left[ \sum_{i=1}^{n} x_i \dfrac{\partial h_X}{\partial x_i} (t\mathbf{x})\right]\mathrm{d}t=\int_{0}^{1}\left[\dfrac{\mathrm{d}}{\mathrm{d}t}h_X(t\mathbf{x})\right]\mathrm{d}t=h_X(\mathbf{x})=\langle X(\mathbf{x}),\mathbf{x}\rangle,
\end{align*}
and hence we get the conclusion.
\end{proof}

\section{Two results concerning the existence of equilibria of vector fields}

In this section we present two consequences of the results from the previous section, regarding the co-existence of equilibrium states of a continuously differentiable vector field and its conservative part. In order to do that we recall a classical result which provides a boundary condition that guarantees the existence of solutions to the equation $f(\mathbf{x})=\mathbf{0}$, where $f:\overline{B}_{r}(\mathbf{0})\rightarrow\mathbb{R}^{n}$ is a continuous function defined on the $n-$dimensional closed ball of radius $r>0$ centered at the origin, $\overline{B}_{r}(\mathbf{0}):=\{\mathbf{x}\in\mathbb{R}^{n}:~ \|\mathbf{x}\|\leq r\}$. For details regarding the proof of this result, see e.g. \cite{francu}.

\begin{theorem}[\cite{francu}]\label{TFR} Let $f:\overline{B}_{r}(\mathbf{0})\rightarrow\mathbb{R}^{n}$ be a continuous mapping satisfying the boundary condition 
\begin{equation*}
\langle f(\mathbf{x}),\mathbf{x}\rangle>0, ~ \forall\mathbf{x}\in \partial\overline{B}_{r}(\mathbf{0}).
\end{equation*}
Then the equation $f(\mathbf{x})=\mathbf{0}$ admits at least one solution $\mathbf{x}\in B_{r}(\mathbf{0})$.
\end{theorem}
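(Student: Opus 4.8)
The plan is to deduce Theorem \ref{TFR} from the Brouwer fixed point theorem by a standard normalization argument, arguing by contradiction. Suppose, contrary to the conclusion, that $f(\mathbf{x})\neq\mathbf{0}$ for every $\mathbf{x}\in B_{r}(\mathbf{0})$. Since the boundary condition $\langle f(\mathbf{x}),\mathbf{x}\rangle>0$ on $\partial\overline{B}_{r}(\mathbf{0})$ already forces $f(\mathbf{x})\neq\mathbf{0}$ on the sphere, the assumption gives $f(\mathbf{x})\neq\mathbf{0}$ on the whole closed ball $\overline{B}_{r}(\mathbf{0})$. Then the map
\begin{equation*}
g\colon\overline{B}_{r}(\mathbf{0})\rightarrow\overline{B}_{r}(\mathbf{0}),\qquad g(\mathbf{x}):=-\,r\,\dfrac{f(\mathbf{x})}{\|f(\mathbf{x})\|},
\end{equation*}
is well defined and continuous, since $\|f(\mathbf{x})\|>0$ everywhere on $\overline{B}_{r}(\mathbf{0})$, and its image lies in $\partial\overline{B}_{r}(\mathbf{0})\subseteq\overline{B}_{r}(\mathbf{0})$.

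By the Brouwer fixed point theorem, $g$ has a fixed point $\mathbf{x}_{0}\in\overline{B}_{r}(\mathbf{0})$, i.e. $\mathbf{x}_{0}=g(\mathbf{x}_{0})$. Since $g$ takes values in the sphere of radius $r$, necessarily $\|\mathbf{x}_{0}\|=r$, so $\mathbf{x}_{0}\in\partial\overline{B}_{r}(\mathbf{0})$. From $\mathbf{x}_{0}=-r f(\mathbf{x}_{0})/\|f(\mathbf{x}_{0})\|$ we get $f(\mathbf{x}_{0})=-\big(\|f(\mathbf{x}_{0})\|/r\big)\mathbf{x}_{0}$, and therefore
\begin{equation*}
\langle f(\mathbf{x}_{0}),\mathbf{x}_{0}\rangle=-\dfrac{\|f(\mathbf{x}_{0})\|}{r}\,\|\mathbf{x}_{0}\|^{2}=-\,r\,\|f(\mathbf{x}_{0})\|<0,
\end{equation*}
which contradicts the hypothesis $\langle f(\mathbf{x}),\mathbf{x}\rangle>0$ for all $\mathbf{x}\in\partial\overline{B}_{r}(\mathbf{0})$. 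Hence there exists $\mathbf{x}\in B_{r}(\mathbf{0})$ with $f(\mathbf{x})=\mathbf{0}$, which is the assertion.

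An alternative route, which I would mention as a remark, is to use the topological degree: the affine homotopy $H_{t}(\mathbf{x})=(1-t)\mathbf{x}+t f(\mathbf{x})$ satisfies $\langle H_{t}(\mathbf{x}),\mathbf{x}\rangle=(1-t)\|\mathbf{x}\|^{2}+t\langle f(\mathbf{x}),\mathbf{x}\rangle>0$ on $\partial\overline{B}_{r}(\mathbf{0})$ for every $t\in[0,1]$, so $H_{t}$ never vanishes on the boundary; homotopy invariance then yields $\deg(f,B_{r}(\mathbf{0}),\mathbf{0})=\deg(\mathrm{id},B_{r}(\mathbf{0}),\mathbf{0})=1\neq0$, whence the existence of a zero of $f$ in $B_{r}(\mathbf{0})$. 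The only genuinely nontrivial ingredient in either approach is the Brouwer fixed point theorem (equivalently, the basic properties of the degree), which is exactly why the statement is quoted from \cite{francu} rather than reproved here; the rest is the elementary algebraic manipulation above.
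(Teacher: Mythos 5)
Your proof is correct and complete. Note that the paper does not actually prove this statement: it is quoted verbatim from Franc\r{u}'s survey and used as a black box, so there is no in-paper argument to compare yours against. Your contradiction argument via the normalized map $g(\mathbf{x})=-r\,f(\mathbf{x})/\|f(\mathbf{x})\|$ and Brouwer's fixed point theorem is the standard route to this result, and every step checks out: the boundary condition rules out zeros of $f$ on the sphere, so under the contradiction hypothesis $g$ is a continuous self-map of the closed ball; its fixed point necessarily lies on $\partial\overline{B}_{r}(\mathbf{0})$, where the computation $\langle f(\mathbf{x}_{0}),\mathbf{x}_{0}\rangle=-r\|f(\mathbf{x}_{0})\|<0$ contradicts the hypothesis. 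The degree-theoretic remark is likewise correct and is essentially the argument one finds in the cited survey (the result is a form of the Poincar\'e--Bohl/boundary condition existence theorem); either presentation would serve as a legitimate self-contained proof of the quoted theorem.
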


Next, we prove that in the case of a continuously differentiable vector field $X$, the hypothesis of Theorem \ref{TFR} forces the conclusion to holds true not only for the vector field $X$, but also for its conservative part $\nabla H_X$, where the formula of the potential function $H_X$ is given by the equality \eqref{potential}. More precisely, we obtain the following result.
\begin{proposition} 
Let $X\in\mathfrak{X}(\overline{B}_{r}(\mathbf{0}))$ be a $\mathcal{C}^{1}$ vector field defined on the closed ball $\overline{B}_{r}(\mathbf{0})\subset\mathbb{R}^{n}$, satisfying the boundary condition 
\begin{equation}\label{bdcond}
\langle X(\mathbf{x}),\mathbf{x}\rangle>0, ~ \forall\mathbf{x}\in \partial\overline{B}_{r}(\mathbf{0}).
\end{equation}
Then both the vector field $X$ and its conservative part $\nabla H_X$, admits at least one equilibrium state in $B_{r}(\mathbf{0})$.
\end{proposition}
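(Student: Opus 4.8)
The plan is to invoke Theorem \ref{TFR} twice: once for $X$ itself, and once for its conservative part $\nabla H_X$. The first application is immediate. Since $X$ is continuous on $\overline{B}_{r}(\mathbf{0})$ and satisfies the boundary condition \eqref{bdcond}, Theorem \ref{TFR} applied to $f:=X$ yields a point $\mathbf{x}_{0}\in B_{r}(\mathbf{0})$ with $X(\mathbf{x}_{0})=\mathbf{0}$, i.e.\ an equilibrium state of $X$ in the open ball. So the whole content of the statement is really the claim for $\nabla H_{X}$.

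For the conservative part, the key point is that $\nabla H_{X}$ inherits the boundary condition \eqref{bdcond} verbatim, i.e.\ that $\langle \nabla H_{X}(\mathbf{x}),\mathbf{x}\rangle=\langle X(\mathbf{x}),\mathbf{x}\rangle$ for every $\mathbf{x}\in\partial\overline{B}_{r}(\mathbf{0})$. I would obtain this by first applying Proposition \ref{prpok} with $\Omega:=B_{r}(\mathbf{0})$, which is open and star-shaped with respect to the origin; note that the restriction of $H_{X}$ to $B_{r}(\mathbf{0})$ is still given by the integral formula \eqref{potential} (for $\mathbf{x}\in B_{r}(\mathbf{0})$ and $t\in[0,1]$ one has $t\mathbf{x}\in B_{r}(\mathbf{0})$), and $\nabla H_{X}$ is a local operator, so Proposition \ref{prpok} indeed gives $\langle \nabla H_{X}(\mathbf{x}),\mathbf{x}\rangle=\langle X(\mathbf{x}),\mathbf{x}\rangle$ for all $\mathbf{x}\in B_{r}(\mathbf{0})$. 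Both sides of this equality are continuous on the whole closed ball $\overline{B}_{r}(\mathbf{0})$ (because $X$ is continuous and $H_{X}\in\mathcal{C}^{1}(\overline{B}_{r}(\mathbf{0}),\mathbb{R})$), and $B_{r}(\mathbf{0})$ is dense in $\overline{B}_{r}(\mathbf{0})$, so the identity extends to all of $\overline{B}_{r}(\mathbf{0})$, in particular to the sphere $\partial\overline{B}_{r}(\mathbf{0})$.

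Combining this with \eqref{bdcond} gives $\langle \nabla H_{X}(\mathbf{x}),\mathbf{x}\rangle>0$ for every $\mathbf{x}\in\partial\overline{B}_{r}(\mathbf{0})$. Since $\nabla H_{X}\colon\overline{B}_{r}(\mathbf{0})\rightarrow\mathbb{R}^{n}$ is continuous, Theorem \ref{TFR} applies once more, now with $f:=\nabla H_{X}$, producing a point $\mathbf{y}_{0}\in B_{r}(\mathbf{0})$ with $\nabla H_{X}(\mathbf{y}_{0})=\mathbf{0}$, i.e.\ an equilibrium state of the conservative part in the open ball. This completes the argument.

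The only step that is not a plain citation is the passage from the open-domain hypothesis of Proposition \ref{prpok} to the closed ball $\overline{B}_{r}(\mathbf{0})$, and I expect this to be the single (mild) obstacle; I would resolve it exactly as above, by applying Proposition \ref{prpok} on the open ball $B_{r}(\mathbf{0})$ and then using continuity of $\mathbf{x}\mapsto\langle \nabla H_{X}(\mathbf{x}),\mathbf{x}\rangle-\langle X(\mathbf{x}),\mathbf{x}\rangle$ together with the density of $B_{r}(\mathbf{0})$ in $\overline{B}_{r}(\mathbf{0})$. (Equivalently, one may simply note that $X$, being $\mathcal{C}^{1}$ on $\overline{B}_{r}(\mathbf{0})$, extends to a $\mathcal{C}^{1}$ vector field on an open ball $B_{R}(\mathbf{0})$ with $R>r$, and apply Proposition \ref{prpok} on that larger star-shaped open set; but the density argument avoids any extension.)
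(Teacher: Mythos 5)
Your proof is correct and follows essentially the same route as the paper: transfer the boundary condition from $X$ to $\nabla H_{X}$ via the identity $\langle \nabla H_{X}(\mathbf{x}),\mathbf{x}\rangle=\langle X(\mathbf{x}),\mathbf{x}\rangle$ of Proposition \ref{prpok}, then apply Theorem \ref{TFR} to each vector field. The only difference is that you explicitly patch the mismatch between the open star-shaped domain hypothesis of Proposition \ref{prpok} and the closed ball (via density/continuity, or extension), a point the paper's own proof silently glosses over; your handling of it is sound.
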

\begin{proof}
Using the Proposition \ref{prpok} we get that $\langle \nabla H_X (\mathbf{x}),\mathbf{x}\rangle = \langle X(\mathbf{x}),\mathbf{x}\rangle, ~ \forall\mathbf{x}\in\overline{B}_{r}(\mathbf{0}),$ and hence the boundary condition \eqref{bdcond} becomes
$$
\langle \nabla H_X (\mathbf{x}),\mathbf{x}\rangle = \langle X(\mathbf{x}),\mathbf{x}\rangle>0, ~ \forall\mathbf{x}\in \partial\overline{B}_{r}(\mathbf{0}).
$$
Now the conclusion follows by applying Theorem \ref{TFR} to the vector field $X$ and also to its conservative part $\nabla H_X$.
\end{proof}

Next result presents a sufficient condition that guarantees the existence of equilibrium states of a continuously differentiable vector filed and its conservative part, when both are perturbed by an arbitrary constant vector field. 
\begin{proposition} 
Let $X\in\mathfrak{X}(\mathbb{R}^{n})$ be a $\mathcal{C}^{1}$ coercive vector field. Then for any fixed $\mathbf{b}\in\mathbb{R}^{n}$, each of the vector fields $X+\mathbf{b}$ and $\nabla H_X +\mathbf{b}$, admits at least one equilibrium state.
\end{proposition}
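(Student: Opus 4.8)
The plan is to reduce both assertions to Theorem \ref{TFR} by producing a sufficiently large ball on whose boundary the relevant radial inner product is strictly positive. First I would fix $\mathbf{b}\in\mathbb{R}^{n}$ and note that, by the Cauchy--Schwarz inequality, for every $\mathbf{x}\neq\mathbf{0}$ one has
$$
\frac{\langle X(\mathbf{x})+\mathbf{b},\mathbf{x}\rangle}{\|\mathbf{x}\|}=\frac{\langle X(\mathbf{x}),\mathbf{x}\rangle}{\|\mathbf{x}\|}+\frac{\langle \mathbf{b},\mathbf{x}\rangle}{\|\mathbf{x}\|}\ge \frac{\langle X(\mathbf{x}),\mathbf{x}\rangle}{\|\mathbf{x}\|}-\|\mathbf{b}\|.
$$
Since $X$ is coercive, the right-hand side tends to $+\infty$ as $\|\mathbf{x}\|\to\infty$, so there exists $R>0$ such that $\langle X(\mathbf{x})+\mathbf{b},\mathbf{x}\rangle>0$ for every $\mathbf{x}$ with $\|\mathbf{x}\|=R$. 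Restricting the continuous mapping $X+\mathbf{b}$ to $\overline{B}_{R}(\mathbf{0})$ and applying Theorem \ref{TFR} then yields a point $\mathbf{x}\in B_{R}(\mathbf{0})$ with $X(\mathbf{x})+\mathbf{b}=\mathbf{0}$, i.e. an equilibrium state of $X+\mathbf{b}$.

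For the conservative part I would invoke relation \eqref{rel1} (equivalently Proposition \ref{prpok}), which gives $\langle \nabla H_{X}(\mathbf{x}),\mathbf{x}\rangle=\langle X(\mathbf{x}),\mathbf{x}\rangle$ for all $\mathbf{x}\in\mathbb{R}^{n}$, hence $\langle \nabla H_{X}(\mathbf{x})+\mathbf{b},\mathbf{x}\rangle=\langle X(\mathbf{x})+\mathbf{b},\mathbf{x}\rangle$; consequently the very same radius $R$ works, and since $H_{X}\in\mathcal{C}^{1}(\mathbb{R}^{n})$ (so that $\nabla H_{X}$ is continuous), Theorem \ref{TFR} applied to $\nabla H_{X}+\mathbf{b}$ on $\overline{B}_{R}(\mathbf{0})$ supplies the desired equilibrium. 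Alternatively, one may first use Theorem \ref{mtm} to record that $\nabla H_{X}$ is itself coercive, and then rerun the estimate of the first paragraph verbatim with $X$ replaced by $\nabla H_{X}$.

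The argument is essentially immediate once the coercivity estimate is written down, so I do not anticipate a serious obstacle. The only points demanding a little care are the verifications that the hypotheses of Theorem \ref{TFR} are genuinely met: that $\nabla H_{X}$ is a bona fide continuous map on the closed ball, which follows from the $\mathcal{C}^{1}$ regularity of $X$ together with Presnov's decomposition \eqref{tdec}; and the \emph{strictness} of the boundary inequality, which is guaranteed precisely because coercivity forces $\langle X(\mathbf{x}),\mathbf{x}\rangle/\|\mathbf{x}\|$ to exceed any prescribed constant — in particular $\|\mathbf{b}\|$ — outside a compact set.
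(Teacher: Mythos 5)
Your proof is correct and follows essentially the same route as the paper: obtain a sphere $\partial\overline{B}_{R}(\mathbf{0})$ on which $\langle X(\mathbf{x})+\mathbf{b},\mathbf{x}\rangle>0$ from coercivity, transfer the same boundary inequality to $\nabla H_{X}+\mathbf{b}$ via the identity $\langle \nabla H_{X}(\mathbf{x}),\mathbf{x}\rangle=\langle X(\mathbf{x}),\mathbf{x}\rangle$ (equivalently Theorem \ref{mtm}), and apply Theorem \ref{TFR} to both fields. You merely make explicit the Cauchy--Schwarz estimate that the paper leaves implicit in the phrase ``using the definition of coercivity.''
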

\begin{proof}
Let us start by fixing an arbitrary vector $\mathbf{b}\in\mathbb{R}^{n}$. Using the Theorem \ref{mtm} we obtain that the conservative part of $X$ is also a coercive vector field, and hence using the definition of coercivity, there exists $\rho=\rho(\mathbf{b})>0$ such that 
$$
\langle \nabla H_X (\mathbf{x})+\mathbf{b},\mathbf{x}\rangle = \langle X(\mathbf{x})+\mathbf{b},\mathbf{x}\rangle>0, ~ \forall\mathbf{x}\in \partial\overline{B}_{\rho}(\mathbf{0}).
$$
Now the conclusion follows by Theorem \ref{TFR} applied to each of the vector fields $X+\mathbf{b}$ and $\nabla H_X + \mathbf{b}$.
\end{proof}

\bigskip
\bigskip

\noindent {\sc R.M. Tudoran}\\
West University of Timi\c soara\\
Faculty of Mathematics and Computer Science\\
Department of Mathematics\\
Blvd. Vasile P\^arvan, No. 4\\
300223 - Timi\c soara, Rom\^ania.\\
E-mail: {\sf razvan.tudoran@e-uvt.ro}\\
\medskip

\end{document}